\theoremstyle{plain}
\newtheorem{theorem}{Theorem}
\newtheorem{lemma}[theorem]{Lemma}
\newtheorem{proposition}[theorem]{Proposition}
\theoremstyle{definition}
\newtheorem{example}[theorem]{Example}
\title{Optimal codes in the Stiefel manifold}
\author{John Jasper\thanks{Department of Mathematics and Statistics, Air Force Institute of Technology, Wright-Patterson AFB, Ohio, USA} \and Nathan Mankovich\thanks{Image Processing Laboratory, University of Valencia, Valencia, Spain} \and Dustin G.\ Mixon\thanks{Department of Mathematics, The Ohio State University, Columbus, Ohio, USA} \thanks{Translational Data Analytics Institute, The Ohio State University, Columbus, Ohio, USA}}
\date{}
\begin{document}
\maketitle

\begin{abstract}
We consider the coding problem in the Stiefel manifold with chordal distance.
After considering various low-dimensional instances of this problem, we use Rankin's bounds on spherical codes to prove upper bounds on the minimum distance of a Stiefel code, and then we construct several examples of codes that achieve equality in these bounds.
\end{abstract}

\section{Introduction}

Given $F\in\{\mathbb{R},\mathbb{C}\}$ and $d,r\in\mathbb{N}$ with $d\geq r$, consider the \textbf{Stiefel manifold}
\[
\operatorname{St}_F(d,r)
:=\{X\in F^{d\times r}:X^*X=I_r\}.
\]
We view this manifold as a metric space with the \textbf{chordal distance} induced by the Frobenius norm:
\[
(X,Y)
\longmapsto\|X-Y\|_{\operatorname{Fro}}.
\]
Given $n\in\mathbb{N}$ with $n\geq2$, we wish to solve the following coding problem:
\[
\text{maximize}
\qquad
\min_{\substack{i,j\in[n]\\i\neq j}}
\|X_i-X_j\|_{\operatorname{Fro}}
\qquad
\text{subject to}
\qquad
X_1,\ldots,X_n\in\operatorname{St}_F(d,r).
\]

This coding problem naturally emerges in the context of wireless communication~\cite{HussienEtal:15}.
In this setting, each $X_i$ represents a codeword that is transmitted in concert by $r$ different antennas over the course of $d$ consecutive time slots.
The Stiefel constraint models an antenna array in which the antennas transmit orthogonally with the same energy.
By maximizing the minimum chordal distance, we enable a reciever array to distinguish the codewords, even after corruption by noise.
In~\cite{HussienEtal:15}, Stiefel codes were constructed by numerical optimization, but there is no guarantee that the resulting codes are globally optimal.
In this paper, we construct several examples of optimal Stiefel codes by adapting ideas from the better-studied setting of Grassmannian codes~\cite{ConwayHS:96,FickusM:15}.

In the next section, we examine various low-dimensional instances of our problem.
In Sections~3 and~4, we introduce the Stiefel simplex and orthoplex bounds, respectively, and we construct several examples of codes that achieve equality in these bounds.
We conclude in Section~5 with a discussion.

\section{Optimal codes in small dimensions}

Our coding problem is relatively simple in certain low-dimensional settings.

\begin{example}
\label{ex.R_1_1}
When $F=\mathbb{R}$ and $d=r=1$, the Stiefel manifold is a two-point space $\{\pm1\}$.
The optimal codes with $n=2$ take $X_1=\pm1$ and $X_2=\mp1$, and for all other choices of $n$, every code is optimal.
\end{example}

\begin{example}
\label{ex.circle}
When $F=\mathbb{R}$, $d=2$, and $r=1$, the Stiefel manifold is the unit circle in $\mathbb{R}^2$.
Given $n\geq 2$ points in the circle, consider the angles $\{\theta_k\}_{k=1}^n$ between adjacent points.
Then the minimum chordal distance is achieved by the points that share the smallest angle $\theta_k$, and furthermore,
\[
\min_k\theta_k
\leq\frac{1}{n}\sum_{k=1}^n\theta_k
=\frac{2\pi}{n},
\]
with equality precisely when $\theta_k=\frac{2\pi}{n}$ for all $k$.
It follows that the optimal codes in the circle consist of uniformly spaced points (as one might expect).
\end{example}

\begin{example}
When $F=\mathbb{C}$ and $d=r=1$, the Stiefel manifold is $U(1)$, i.e., the unit circle in $\mathbb{C}$.
By Example~\ref{ex.circle}, it follows that the optimal codes consist of uniformly spaced points, e.g., the cyclic subgroup $C_n\leq U(1)$.
\end{example}

\begin{example}
When $F=\mathbb{R}$ and $d=r=2$, the Stiefel manifold is $O(2)$, which is the union of two disjoint circles of radius $\sqrt{2}$ centered at the zero matrix:
\[
SO(2)
=\left\{\left[\begin{smallmatrix}a&-b\\b&\phantom{-}a\end{smallmatrix}\right]:a^2+b^2=1\right\}
\cup\left\{\left[\begin{smallmatrix}a&\phantom{-}b\\b&-a\end{smallmatrix}\right]:a^2+b^2=1\right\}
=:C_+\cup C_-.
\]
Note that $C_+$ and $C_-$ reside in orthogonally complementary subspaces of $\mathbb{R}^{2\times 2}$, and so every point in $C_+$ has distance $2$ from every point in $C_-$.
Given $n\geq2$, there must be some $a,b\in\mathbb{N}\cup\{0\}$ with $a+b=n$ for which the optimal code consists of $a$ points in $C_+$ and $b$ points in $C_-$.
By Example~\ref{ex.circle}, we may assume the points in each circle are uniformly distributed, and so the minimum distance in the code is $\sqrt{4-4\cos(\frac{2\pi}{k(a,b)})}$, where
\[
k(a,b)=
\left\{
\begin{array}{cl}
\max\{a,b\} & \text{if }\min\{a,b\}=0\\
\max\{a,b,4\} & \text{if }\min\{a,b\}>0.
\end{array}
\right.
\]
Let $k_n$ denote the minimum $k(a,b)$ subject to $a+b=n$ and $a,b\in\mathbb{N}\cup\{0\}$.
Then the largest possible minimum distance between $n$ points in $O(2)$ is $\sqrt{4-4\cos(\frac{2\pi}{k_n})}$, where
\[
\begin{tabular}{c|ccccccccccc}
$n$ & 2 & 3 & 4 & 5 & 6 & 7 & 8 & 9 & 10 & 11 & 12 \\\hline
$k_n$ & 2 & 3 & 4 & 4 & 4 & 4 & 4 & 5 & 5 & 6 & 6
\end{tabular}
\]
and the pattern stabilizes with $k_n=\lceil \frac{n}{2}\rceil$ for $n\geq8$.
For various choices of $n$, some optimizers are given by the cyclic subgroups $C_2,C_3,C_4\leq SO(2)$, as well as the dihedral subgroups $D_2,D_3,D_4,\ldots\leq O(2)$.
\end{example}

\begin{example}
When $F=\mathbb{C}$ and $d=r=2$, the Stiefel manifold is $U(2)$.
In this case, \cite{LiangX:02} reports optimal codes for each $n\leq 16$.
In terms of the current paper, the optimal codes with $n\leq 9$ achieve equality in Theorem~\ref{thm.stiefel simplex bound}, whereas the optimal codes with $n\in(9,16]$ achieve equality in Theorem~\ref{thm.stiefel orthoplex bound} (cf.\ Proposition~2 in~\cite{LiangX:02}).
\end{example}

\begin{example}
When $F=\mathbb{R}$, $d=3$, and $r=1$, the Stiefel manifold is the unit sphere in $\mathbb{R}^3$, and so our coding problem reduces to the famous \textit{Tammes problem}~\cite{Tammes:30}, for which exact solutions are only known for $n\leq 14$ and $n=24$ (see~\cite{CohnLL:24} and references therein), although many more putatively optimal codes have been discovered by numerical optimization~\cite{Sloane:online,Cohn:online}.
\end{example}

\begin{example}
\label{ex.sphere}
When $r=1$, then the Stiefel manifold is the unit sphere in $F^d$, which is isomorphic as a metric space to $S^{md-1}$ with chordal distance, where $m:=[F:\mathbb{R}]$ is the degree of the field extension $F/\mathbb{R}$.
In this setting, codes are known as \textit{spherical codes}~\cite{EricsonZ:01}.
There are four known infinite families of optimal codes in $S^{d-1}$ with $d\geq 2$:
\begin{itemize}
\item[(i)]
$d=2$, $n\geq 2$: uniformly distributed points on the circle; see Example~\ref{ex.circle}.
\item[(ii)]
$d\geq2$, $n\in[2,d+1]$: vertices of a simplex centered at the origin; see Proposition~\ref{prop.rankin simplex}.
\item[(iii)]
$d\geq2$, $n\in(d+1,2d]$: equality in the \textit{orthoplex bound}, e.g., by taking any $n$ of the $2d$ vertices of an orthoplex centered at the origin; see Proposition~\ref{prop.rankin orthoplex}.
\item[(iv)]
$d=q(q^3+1)/(q+1)$, $n=(q+1)(q^3+1)$, $q$ a prime power: spherical embedding of the point graph of a generalized quadrangle $(q,q^2)$; see~\cite{CameronGS:78,Levenshtein:92}.
\end{itemize}
In addition to these infinite families, there are finitely many sporadic examples of known optimal spherical codes; see Tables~1.1 and~1.2 in~\cite{CohnLL:24} for an up-to-date list.
\end{example}

The codes in Example~\ref{ex.sphere}(ii) and~(iii) are optimal as a consequence of achieving equality in Rankin's simplex and orthoplex bounds~\cite{Rankin:55}, which we present below.

\begin{proposition}[Rankin's simplex bound]
\label{prop.rankin simplex}
For every $x_1,\ldots,x_n\in S^{d-1}$, it holds that
\[
\min_{\substack{i,j\in[n]\\i\neq j}}\|x_i-x_j\|
\leq\sqrt{\frac{2n}{n-1}},
\]
with equality precisely when $\{x_i\}_{i\in[n]}$ form the vertices of a regular simplex centered at the origin.
Furthermore, such a code exists only if $n\leq d+1$.
\end{proposition}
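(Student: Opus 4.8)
The plan is to reduce everything to inner products and then exploit the nonnegativity of $\|\sum_i x_i\|^2$. First I would record that since each $x_i$ is a unit vector, $\|x_i-x_j\|^2 = 2 - 2\langle x_i,x_j\rangle$, so the statement that the minimum distance equals some value $\delta$ is equivalent to having $\langle x_i,x_j\rangle \leq 1 - \tfrac{\delta^2}{2}$ for every $i\neq j$, with equality in at least one pair. This converts the combinatorial minimum-distance condition into a uniform upper bound on all pairwise inner products, which is the form needed for an averaging argument.

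Next I would expand the squared norm of the sum of the codewords:
\[
0 \leq \Bigl\|\sum_{i=1}^n x_i\Bigr\|^2 = \sum_{i=1}^n\|x_i\|^2 + \sum_{\substack{i,j\in[n]\\i\neq j}}\langle x_i,x_j\rangle = n + \sum_{\substack{i,j\in[n]\\i\neq j}}\langle x_i,x_j\rangle.
\]
Since there are $n(n-1)$ ordered pairs with $i\neq j$, substituting the inner-product bound gives $0 \leq n + n(n-1)\bigl(1-\tfrac{\delta^2}{2}\bigr)$. Solving this inequality for $\delta$ yields $\delta^2 \leq \tfrac{2n}{n-1}$, which is exactly the claimed bound.

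For the equality characterization I would trace back through the two inequalities used. Equality forces $\sum_i x_i = 0$, so the code is centered at the origin, and it simultaneously forces $\langle x_i,x_j\rangle = -\tfrac{1}{n-1}$ for every $i\neq j$. A configuration of unit vectors all of whose pairwise inner products equal $-\tfrac{1}{n-1}$ is precisely a regular simplex centered at the origin, and conversely such a simplex realizes these inner products, giving the stated characterization. I expect this equality analysis to be the main obstacle: I must confirm that both extremal conditions — the vanishing vector sum and the constant inner product — are genuinely necessary, which holds because the only slack in the chain of inequalities enters through exactly these two steps.

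Finally, for the dimension constraint I would examine the Gram matrix of an extremal configuration, namely $G = \tfrac{n}{n-1}I_n - \tfrac{1}{n-1}J_n$, where $J_n$ is the all-ones matrix. Its eigenvalues are $0$ on the all-ones vector and $\tfrac{n}{n-1}$ on the orthogonal complement, so $\operatorname{rank}G = n-1$. Since the codewords lie in $F^d$, which carries a real inner-product structure of dimension $[F:\mathbb{R}]\cdot d \geq d$, the span of the $x_i$ has dimension $\operatorname{rank}G = n-1$, forcing $d \geq n-1$, i.e.\ $n \leq d+1$. This completes the argument.
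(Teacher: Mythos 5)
Your proof is correct. There is, however, no in-paper argument to compare it against: the paper states this proposition as a classical result of Rankin, citing his 1955 paper, and uses it only as a black box to derive the Stiefel simplex bound (Theorem~\ref{thm.stiefel simplex bound}) by relaxing the Stiefel coding problem to the ambient sphere of radius $\sqrt{r}$. Your argument is the standard self-contained proof: converting the minimum-distance condition into the uniform bound $\langle x_i,x_j\rangle\le 1-\tfrac{\delta^2}{2}$, invoking $0\le\bigl\|\sum_i x_i\bigr\|^2$, and then observing that equality collapses the entire chain, forcing both the vanishing sum and the constant inner product $-\tfrac{1}{n-1}$; the eigenvalue computation for $G=\tfrac{n}{n-1}I_n-\tfrac{1}{n-1}J_n$ correctly yields $\operatorname{rank}G=n-1\le d$. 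One blemish in the final paragraph: you say the codewords lie in $F^d$, which ``carries a real inner-product structure of dimension $[F:\mathbb{R}]\cdot d$.'' This proposition concerns the real sphere $S^{d-1}\subseteq\mathbb{R}^d$, so no field $F$ enters; moreover, if the ambient space genuinely had real dimension $md$ with $m>1$, your rank argument would only give $n\le md+1$, not $n\le d+1$ (this is exactly the distinction the paper exploits in Lemma~\ref{lem.r=1} and Theorem~\ref{thm.stiefel simplex bound}, where the bound is $n\le mdr+1$). Since you nevertheless draw the correct conclusion $d\ge n-1$, this is a stray remark rather than a gap, but it should be removed for coherence.
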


\begin{proposition}[Rankin's orthoplex bound]
\label{prop.rankin orthoplex}
For every $x_1,\ldots,x_n\in S^{d-1}$ with $n>d+1$, it holds that
\[
\min_{\substack{i,j\in[n]\\i\neq j}}\|x_i-x_j\|
\leq\sqrt{2}.
\]
Equality is achievable only if $n\leq 2d$, in which case equality is achieved, for example, by any $n$ of the $2d$ vertices of an orthoplex centered at the origin, i.e., an orthonormal basis union its negative.
\end{proposition}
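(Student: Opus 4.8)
The plan is to translate the distance condition into one on inner products and then invoke two facts about systems of vectors with non-positive pairwise inner products. For unit vectors we have $\|x_i-x_j\|^2=2-2\langle x_i,x_j\rangle$, so $\|x_i-x_j\|>\sqrt2$ holds exactly when $\langle x_i,x_j\rangle<0$, and $\|x_i-x_j\|\geq\sqrt2$ holds exactly when $\langle x_i,x_j\rangle\leq0$. Thus the inequality $\min_{i\neq j}\|x_i-x_j\|\leq\sqrt2$ is equivalent to the statement that $n$ unit vectors in $\mathbb{R}^d$ cannot all have pairwise strictly negative inner products once $n>d+1$, while the equality condition amounts to bounding the number of unit vectors whose pairwise inner products are all non-positive.

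For the bound itself, I would prove by induction on $d$ the classical lemma that at most $d+1$ vectors in $\mathbb{R}^d$ can have pairwise strictly negative inner products. Given such vectors $v_1,\dots,v_m$, I project $v_1,\dots,v_{m-1}$ onto the orthogonal complement of $v_m$, writing $v_i=w_i+c_iv_m$ with $c_i=\langle v_i,v_m\rangle/\|v_m\|^2<0$. A short computation gives $\langle w_i,w_j\rangle=\langle v_i,v_j\rangle-c_ic_j\|v_m\|^2<0$, and each $w_i\neq0$ (a zero $w_i$ would make $v_i$ a negative multiple of $v_m$, which is incompatible with the remaining vectors when $m\geq3$; the cases $m\leq2$ are trivial), so the inductive hypothesis in dimension $d-1$ yields $m-1\leq d$. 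Since $n>d+1$, the vectors $x_1,\dots,x_n$ cannot all be pairwise obtuse, so some pair satisfies $\langle x_i,x_j\rangle\geq0$, i.e.\ $\|x_i-x_j\|\leq\sqrt2$, which proves the bound.

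The harder part is the equality condition, which follows from the fact that at most $2d$ nonzero vectors in $\mathbb{R}^d$ can have pairwise non-positive inner products, and I would prove this by a decomposition argument. Form the graph on the vectors with an edge whenever the inner product is \emph{strictly} negative; if $i$ and $j$ lie in different components then $\langle v_i,v_j\rangle=0$, so the components span mutually orthogonal subspaces $V_1,\dots,V_p$ with $\sum_t\dim V_t\leq d$. Within a single component spanning a $k$-dimensional space, any linear dependence $\sum a_iv_i=0$ splits into two non-negative dependences, because $\|\sum_{a_i>0}a_iv_i\|^2=\sum_{a_i>0,\,a_j<0}a_i(-a_j)\langle v_i,v_j\rangle\leq0$ forces $\sum_{a_i>0}a_iv_i=0$; connectivity then forces any positive dependence to have full support (taking the inner product with an omitted vector shows it would be orthogonal to the whole component), and a second independent dependence would produce a positive dependence on a proper subset, a contradiction. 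Hence each component has at most $k+1$ vectors, giving $m\leq\sum_t(\dim V_t+1)\leq d+p\leq 2d$ since each $\dim V_t\geq1$. I expect organizing this decomposition cleanly to be the main obstacle; everything else is routine. Applying it to a code with $\min_{i\neq j}\|x_i-x_j\|=\sqrt2$, all of whose inner products are non-positive, gives $n\leq2d$.

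Finally, to see that equality is attained whenever $d+1<n\leq2d$, I take any $n$ of the $2d$ vectors $\pm e_1,\dots,\pm e_d$. Every pair of these is at distance $\sqrt2$ (distinct coordinate directions) or $2$ (an antipodal pair), so all pairwise distances are at least $\sqrt2$; and since $n\geq3$ exceeds the size $2$ of any single antipodal pair, at least two distinct directions must appear, producing a pair at distance exactly $\sqrt2$. Thus the minimum distance equals $\sqrt2$, as claimed.
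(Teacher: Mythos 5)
Your proof is correct, but there is no in-paper argument to compare it against: the paper states this proposition as a classical result and defers entirely to Rankin's 1955 paper, using it later only as a black box (the Stiefel orthoplex bound, Theorem~\ref{thm.stiefel orthoplex bound}, follows by relaxing $\operatorname{St}_F(d,r)$ to its ambient sphere of radius $\sqrt{r}$). Your argument is the standard self-contained linear-algebraic route, and it is essentially complete: the inequality reduces to the fact that at most $d+1$ nonzero vectors in $\mathbb{R}^d$ can have pairwise strictly negative inner products (your projection-and-induct step, including the check that each $w_i\neq 0$, is sound), and the equality cardinality reduces to the fact that at most $2d$ nonzero vectors can have pairwise nonpositive inner products, which you get by decomposing into components of the strict-negativity graph and bounding each component by one more than the dimension of its span via the dependence-splitting argument; the final count $m\leq d+p\leq 2d$ and the achievability check with $\pm e_1,\ldots,\pm e_d$ are both right. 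This buys a proof from first principles at the cost of length, whereas the paper buys brevity by citation (Rankin's own treatment is geometric, via spherical caps, rather than linear-algebraic). Two spots deserve one more sentence each in a polished write-up: the base case of your induction (in $\mathbb{R}^1$, three nonzero reals cannot have pairwise negative products, or equivalently the projection lands in the zero space and contradicts $w_i\neq0$), and the step where a second independent dependence yields a contradiction (subtract from one full-support positive dependence the multiple of the other that kills a single coefficient; the result is nonzero by independence, and its positive or negative part is then a nonzero nonnegative dependence on a proper subset, contradicting full support).
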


In the following sections, we apply these bounds to obtain tight bounds for our coding problem over more general Stiefel manifolds.

\section{Stiefel simplex codes}

Here and throughout, $m:=[F:\mathbb{R}]$ is the degree of the field extension $F/\mathbb{R}$.

\begin{theorem}[Stiefel simplex bound]
\label{thm.stiefel simplex bound}
For every $X_1,\ldots,X_n\in\operatorname{St}_F(d,r)$, it holds that
\[
\min_{\substack{i,j\in[n]\\i\neq j}}
\|X_i-X_j\|_{\operatorname{Fro}}
\leq\sqrt{\frac{2rn}{n-1}},
\]
with equality precisely when
\[
\|X_i-X_j\|_{\operatorname{Fro}}
=\|X_k-X_l\|_{\operatorname{Fro}}
\quad
\forall \, i,j,k,l\in[n],\, i\neq j,\, k\neq l
\quad\quad
\text{and}
\quad\quad
\sum_{i=1}^n X_i=0.
\]
Furthermore, such a code exists only if $n\leq mdr+1$.
\end{theorem}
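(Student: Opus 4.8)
The plan is to realize the Stiefel manifold as a subset of a Euclidean sphere and then invoke Rankin's simplex bound (Proposition~\ref{prop.rankin simplex}) directly. First I would view $F^{d\times r}$ as a real inner product space of dimension $mdr$, equipped with the real inner product $\langle X,Y\rangle:=\operatorname{Re}\operatorname{tr}(X^*Y)$, whose induced norm is exactly $\|\cdot\|_{\operatorname{Fro}}$. The key observation is that every $X\in\operatorname{St}_F(d,r)$ satisfies $\|X\|_{\operatorname{Fro}}^2=\operatorname{tr}(X^*X)=\operatorname{tr}(I_r)=r$, so the entire Stiefel manifold lies on the sphere of radius $\sqrt{r}$ centered at the origin. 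Consequently the rescaled points $x_i:=X_i/\sqrt{r}$ lie on the unit sphere $S^{mdr-1}$, and $\|x_i-x_j\|=\|X_i-X_j\|_{\operatorname{Fro}}/\sqrt{r}$.

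Applying Proposition~\ref{prop.rankin simplex} to $x_1,\ldots,x_n\in S^{mdr-1}$ then gives $\min_{i\neq j}\|x_i-x_j\|\leq\sqrt{2n/(n-1)}$, and multiplying through by $\sqrt{r}$ yields the claimed bound $\min_{i\neq j}\|X_i-X_j\|_{\operatorname{Fro}}\leq\sqrt{2rn/(n-1)}$. For the equality characterization, I would transfer Rankin's condition: equality holds iff $\{x_i\}$ are the vertices of a regular simplex centered at the origin. Being a regular simplex means all pairwise distances $\|x_i-x_j\|$, hence all $\|X_i-X_j\|_{\operatorname{Fro}}$, coincide; being centered at the origin means $\frac{1}{n}\sum_i x_i=0$, i.e.\ $\sum_i X_i=0$. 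This matches the two stated equality conditions exactly. Finally, Rankin's bound asserts that such a regular-simplex configuration can exist in $S^{D-1}$ only when $n\leq D+1$; taking $D=mdr$ gives the necessary condition $n\leq mdr+1$.

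The proof is largely a matter of setting up the correct isometric embedding, so I do not expect a serious obstacle. The one point requiring care is the real-linear identification of $F^{d\times r}$ with $\mathbb{R}^{mdr}$: one must check that $\langle X,Y\rangle=\operatorname{Re}\operatorname{tr}(X^*Y)$ really is a real inner product inducing the Frobenius norm (the real part is needed in the complex case so that $\langle X,X\rangle=\operatorname{tr}(X^*X)$ is real and positive), and that the real dimension is $mdr$ rather than $dr$. Alternatively, one can sidestep citing Rankin's equality clause and reprove the statement by the standard averaging argument: expanding $\sum_{i\neq j}\|x_i-x_j\|^2=2n^2-2\|\sum_i x_i\|^2\leq 2n^2$ bounds the minimum squared distance by the average $2n/(n-1)$, with equality forcing both equidistance (min equals average) and $\sum_i x_i=0$ (so that $\|\sum_i x_i\|^2=0$). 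This self-contained variant makes the equivalence of the two equality conditions completely transparent.
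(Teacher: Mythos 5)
Your proposal is correct and takes essentially the same approach as the paper: the paper's (much terser) proof likewise observes that $\operatorname{St}_F(d,r)$ lies in the sphere of Frobenius radius $\sqrt{r}$ in $F^{d\times r}$ and inherits Rankin's simplex bound by relaxing to that sphere. Your write-up simply fills in the details the paper leaves implicit --- the real inner product $\operatorname{Re}\operatorname{tr}(X^*Y)$, the identification of the real dimension $mdr$, the rescaling to $S^{mdr-1}$, and the transfer of the equality characterization.
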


\begin{proof}
$\operatorname{St}_F(d,r)$ is contained in a sphere in $F^{d\times r}$ of Frobenius radius $\sqrt{r}$ centered at the zero matrix.
By relaxing our coding problem to this sphere, we inherit Rankin's simplex bound, from which our result follows.
\end{proof}

We refer to codes that achieve equality in the Stiefel simplex bound as \textbf{real (resp.\ complex) $(d,r,n)$-Stiefel simplex codes (SSCs)}.
In this section, we provide several constructions of such codes.
First, we describe various ``new from old'' constructions.

\begin{lemma}\
\begin{itemize}
\item[(a)]
If there exists a real $(d,r,n)$-Stiefel simplex code, then there exist a complex $(d,r,n)$-Stiefel simplex code.
\item[(b)]
If there exists a real (resp.\ complex) $(d,r,n)$-Stiefel simplex code, then there exists a real (resp.\ complex) $(d+1,r,n)$-Stiefel simplex code.
\item[(c)]
If there exists a real (resp.\ complex) $(d,r,n)$-Stiefel simplex code and $k\in\mathbb{N}$, then there exists a real (resp.\ complex) $(kd,kr,n)$-Stiefel simplex code.
\item[(d)]
If there exists a complex $(d,r,n)$-Stiefel simplex code and $k\in\mathbb{N}$, then there exists a real $(2d,2r,n)$-Stiefel simplex code.
\end{itemize}
\end{lemma}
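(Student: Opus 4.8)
The plan is to handle all four parts through a single template. In each case I will exhibit an $\mathbb{R}$-linear map $\phi\colon F^{d\times r}\to {F'}^{d'\times r'}$ (where $F'$, $d'$, $r'$ denote the target field and dimensions) that carries $\operatorname{St}_F(d,r)$ into $\operatorname{St}_{F'}(d',r')$ and scales the Frobenius norm by a fixed constant, $\|\phi(X)\|_{\operatorname{Fro}}=c\,\|X\|_{\operatorname{Fro}}$ for all $X$. Granting such a $\phi$, the conclusion is immediate: if $X_1,\dots,X_n$ is an SSC, then $\phi(X_1),\dots,\phi(X_n)$ lie in $\operatorname{St}_{F'}(d',r')$; linearity gives $\sum_i\phi(X_i)=\phi\!\left(\sum_i X_i\right)=0$; and $\|\phi(X_i)-\phi(X_j)\|_{\operatorname{Fro}}=c\,\|X_i-X_j\|_{\operatorname{Fro}}$ shows the images remain equidistant. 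By the equality characterization in Theorem~\ref{thm.stiefel simplex bound}, a zero-sum equidistant family in a Stiefel manifold is automatically an SSC, so $\phi(X_1),\dots,\phi(X_n)$ is the desired code. Thus everything reduces to naming $\phi$ and checking the two structural facts (image in the Stiefel manifold, norm scaled by $c$) in each case.

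For (a), $\phi$ is the inclusion $\mathbb{R}^{d\times r}\hookrightarrow\mathbb{C}^{d\times r}$ with $c=1$: a real $X$ with $X^\top X=I_r$ satisfies $X^*X=I_r$ over $\mathbb{C}$, and its Frobenius norm is unchanged. For (b), $\phi(X)=\left[\begin{smallmatrix}X\\0\end{smallmatrix}\right]$ appends a zero row, again with $c=1$: padding with zeros alters neither $X^*X$ nor $\|X\|_{\operatorname{Fro}}$. For (c), $\phi(X)=I_k\otimes X$, the $k$-fold block-diagonal copy, with $c=\sqrt{k}$: here $(I_k\otimes X)^*(I_k\otimes X)=I_k\otimes(X^*X)=I_{kr}$ and $\|I_k\otimes X\|_{\operatorname{Fro}}^2=k\,\|X\|_{\operatorname{Fro}}^2$. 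Each of these verifications is a one-line computation.

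Part (d) carries the bulk of the work, though it too remains routine. Here $\phi$ is the realification $\rho$ sending $Z=A+iB$ (with $A,B$ real) to $\left[\begin{smallmatrix}A&-B\\B&A\end{smallmatrix}\right]\in\mathbb{R}^{2d\times2r}$; the stray hypothesis $k\in\mathbb{N}$ plays no role in the construction. The facts to establish are that $\rho$ is $\mathbb{R}$-linear and multiplicative on conformable complex matrices and satisfies $\rho(Z^*)=\rho(Z)^\top$, whence $\rho(Z)^\top\rho(Z)=\rho(Z^*Z)=\rho(I_r)=I_{2r}$ whenever $Z^*Z=I_r$, so $\rho(Z)\in\operatorname{St}_{\mathbb{R}}(2d,2r)$; and that $\|\rho(Z)\|_{\operatorname{Fro}}^2=2\bigl(\|A\|_{\operatorname{Fro}}^2+\|B\|_{\operatorname{Fro}}^2\bigr)=2\,\|Z\|_{\operatorname{Fro}}^2$, giving $c=\sqrt{2}$. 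The only point requiring mild care is that the ring-homomorphism identities for $\rho$ hold for rectangular and not merely square matrices; this follows because the relevant block-matrix identities need only conformability of the factors. With $\rho$ so verified, the template of the first paragraph delivers the claim.
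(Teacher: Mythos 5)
Your proposal is correct and uses essentially the same four constructions as the paper: inclusion of real matrices into complex ones, padding with a zero row, the Kronecker product $I_k\otimes X$, and realification (your block form $\left[\begin{smallmatrix}A&-B\\B&A\end{smallmatrix}\right]$ differs from the paper's entry-wise $2\times 2$ replacement only by row and column permutations, which are isometries). Your unified template --- an $\mathbb{R}$-linear, norm-scaling map into the target Stiefel manifold preserves equidistance and the zero-sum condition, hence sends SSCs to SSCs by the equality characterization in Theorem~\ref{thm.stiefel simplex bound} --- simply makes explicit the verification that the paper leaves to the reader.
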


\begin{proof}
For~(a), given an SSC $\{X_i\}_{i\in [n]}$ in $\operatorname{St}_\mathbb{R}(d,r)$, we observe that these matrices also satisfy the conditions to be an SSC in $\operatorname{St}_\mathbb{C}(d,r)$.
For~(b), given an SSC $\{X_i\}_{i\in [n]}$ in $\operatorname{St}_F(d,r)$, append a $(d+1)$st row of zeros to each $X_i$ to form $Y_i$.
Then $\{Y_i\}_{i\in[n]}$ is an SSC in $\operatorname{St}_F(d+1,r)$.
For~(c), given an SSC $\{X_i\}_{i\in [n]}$ in $\operatorname{St}_F(d,r)$, take $Y_i:=I_k\otimes X_i$.
Then $\{Y_i\}_{i\in[n]}$ is an SSC in $\operatorname{St}_F(kd,kr)$. 
For~(d), given an SSC $\{X_i\}_{i\in [n]}$ in $\operatorname{St}_{\mathbb{C}}(d,r)$, replace each (complex) entry $(X_i)_{ab}$ of $X_i$ with the corresponding real $2\times 2$ matrix
\[
(Y_i)_{ab}
:=\left[\begin{array}{rr}
\operatorname{Re}(X_i)_{ab}&-\operatorname{Im}(X_i)_{ab}\\
\operatorname{Im}(X_i)_{ab}&\operatorname{Re}(X_i)_{ab}
\end{array}\right]
\]
Then $\{Y_i\}_{i\in[n]}$ is an SSC in $\operatorname{St}_{\mathbb{R}}(2d,2r)$. 
\end{proof}

Next, we study the extreme values of $r$.
We start with the simpler case of $r=1$.

\begin{lemma}
\label{lem.r=1}
There exists a $(d,1,n)$-Stiefel simplex code if and only if $n\leq md+1$.
\end{lemma}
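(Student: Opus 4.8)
The plan is to reduce the entire statement to Rankin's simplex bound (Proposition~\ref{prop.rankin simplex}) via the sphere identification of Example~\ref{ex.sphere}. First I would record the key observation that when $r=1$, the constraint $X^*X=I_1$ simply says that $X$ is a unit vector in $F^d$, so $\operatorname{St}_F(d,1)$ is the unit sphere in $F^d$, and the chordal distance $\|X-Y\|_{\operatorname{Fro}}$ is just the norm distance $\|X-Y\|$. By Example~\ref{ex.sphere}, identifying $F^d$ with $\mathbb{R}^{md}$ by recording real and imaginary parts (an $\mathbb{R}$-linear isometry), this sphere is isometric to $S^{md-1}$, and the chordal distance is carried to the Euclidean distance on $S^{md-1}$. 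The point to emphasize is that the Stiefel simplex bound of Theorem~\ref{thm.stiefel simplex bound} specialized to $r=1$ reads $\sqrt{2n/(n-1)}$, which is precisely Rankin's simplex bound for points on $S^{md-1}$.

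Second, I would translate the equality condition. Under the isometry, a $(d,1,n)$-Stiefel simplex code is exactly an $n$-point configuration on $S^{md-1}$ that meets Rankin's simplex bound with equality; by the equality clause of Proposition~\ref{prop.rankin simplex}, read in ambient dimension $md$, these are exactly the vertices of a regular simplex centered at the origin in $\mathbb{R}^{md}$. The ``only if'' direction of the lemma is then immediate: the final sentence of Proposition~\ref{prop.rankin simplex}, applied with $d$ replaced by $md$, says that such a simplex exists only if $n\le md+1$.

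Third, for the converse I would exhibit a regular simplex whenever $n\le md+1$ and transport it back to the Stiefel manifold. The standard construction places the $n$ vectors $v_i$ proportional to $(I-\tfrac1n J)e_i$ in $\mathbb{R}^n$, where $J$ is the all-ones matrix; these are unit vectors summing to zero with common inner product $-\tfrac1{n-1}$, hence a regular simplex, and they lie in the $(n-1)$-dimensional hyperplane $\{x:\sum_i x_i=0\}$. Since $n-1\le md$, this hyperplane embeds isometrically into $\mathbb{R}^{md}\cong F^d$, and pulling the $v_i$ back through the isometry of Example~\ref{ex.sphere} yields the desired $(d,1,n)$-Stiefel simplex code.

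I expect no serious obstacle here: the lemma is essentially a corollary of Rankin's simplex bound once the $r=1$ case is recognized as a spherical coding problem. The only points requiring a little care are checking that the real-linear identification $F^d\cong\mathbb{R}^{md}$ is genuinely an isometry carrying chordal distance to Euclidean distance, and confirming that the two equality characterizations (equal pairwise distances together with a vanishing centroid, versus ``regular simplex centered at the origin'') coincide under this identification --- both of which are routine.
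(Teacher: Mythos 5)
Your proposal is correct and follows essentially the same route as the paper: identify $\operatorname{St}_F(d,1)$ isometrically with the unit sphere in $\mathbb{R}^{md}$ and invoke the classical fact that a regular simplex on $n$ vertices centered at the origin exists in $\mathbb{R}^{md}$ exactly when $n\leq md+1$. The paper simply cites this fact, whereas you also supply the standard construction and the translation of the equality conditions, which the paper leaves implicit.
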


\begin{proof}
Since $\operatorname{St}_F(d,1)$ is isomorphic as a metric space to the unit sphere in $\mathbb{R}^{md}$ with chordal distance, the result follows from the classical fact that there exists a regular simplex on $n$ vertices in $\mathbb{R}^{md}$ if and only if $n\leq md+1$.
\end{proof}

Next, we consider the other extreme where $r=d$.
Recall that the \textbf{Radon--Hurwitz number} $\rho_F(d)$ is the maximum dimension of a real subspace of $F^{d\times d}$ consisting of scalar multiples of orthogonal (resp.\ unitary) matrices~\cite{Radon:22,Hurwitz:23,Adams:62,AdamsLP:65}, and is given by
\[
\rho_F(d)
=\left\{
\begin{array}{ll}
8b+2^c&\text{if }F=\mathbb{R}\\
8b+2c+2&\text{if }F=\mathbb{C},
\end{array}
\right.
\]
where $d=(2a+1)\cdot 2^{4b+c}$ for some nonnegative integers $a$, $b$, and $c$ with $c\leq 3$.
Such matrix spaces were used recently in~\cite{FickusGI:24} to characterize a certain class of codes in the Grassmannian.

\begin{lemma}
If $n\leq \rho_F(d)+1$, then there exists a $(d,d,n)$-Stiefel simplex code.
\end{lemma}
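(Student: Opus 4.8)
The plan is to realize a regular simplex inside a Euclidean sphere that happens to lie entirely within $\operatorname{St}_F(d,d)$, and then invoke the classical simplex-existence fact already used in Lemma~\ref{lem.r=1}. By the definition of the Radon--Hurwitz number, there is a real subspace $V\subseteq F^{d\times d}$ with $\dim_\mathbb{R} V=\rho_F(d)$ such that every nonzero $M\in V$ is a scalar multiple of an orthogonal (resp.\ unitary) matrix. Equipping $F^{d\times d}$ with the real inner product $\langle A,B\rangle:=\operatorname{Re}\operatorname{tr}(A^*B)$, whose induced norm is $\|\cdot\|_{\operatorname{Fro}}$, the subspace $V$ becomes a real Euclidean space of dimension $\rho_F(d)$.

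The key observation I would establish first is that the Frobenius sphere of radius $\sqrt{d}$ in $V$ lies entirely in $\operatorname{St}_F(d,d)$. Indeed, if $M\in V$ is nonzero then $M=cQ$ for some scalar $c$ and some $Q\in\operatorname{St}_F(d,d)$, whence $M^*M=|c|^2 I$ and $\|M\|_{\operatorname{Fro}}^2=\operatorname{tr}(M^*M)=|c|^2 d$. Thus $\|M\|_{\operatorname{Fro}}=\sqrt{d}$ forces $|c|=1$ and hence $M^*M=I$, i.e., $M\in\operatorname{St}_F(d,d)$. In other words, the set $S:=\{M\in V:\|M\|_{\operatorname{Fro}}=\sqrt{d}\}$ is simultaneously a Euclidean $(\rho_F(d)-1)$-sphere inside $V$ and a subset of the Stiefel manifold.

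With this in hand, the remaining step is to inscribe a regular simplex in $S$. Since $S$ is a sphere of dimension $\rho_F(d)-1$ in the Euclidean space $V$ and $n\leq\rho_F(d)+1=\dim_\mathbb{R}V+1$, the same classical fact invoked in Lemma~\ref{lem.r=1} furnishes $n$ points $X_1,\dots,X_n\in S$ forming the vertices of a regular simplex centered at the origin. These points have equal pairwise Frobenius distances and satisfy $\sum_{i=1}^n X_i=0$, which are exactly the two equality conditions in Theorem~\ref{thm.stiefel simplex bound}; since they also lie in $\operatorname{St}_F(d,d)$, they constitute a $(d,d,n)$-Stiefel simplex code.

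I do not anticipate a serious obstacle: the whole argument hinges on the single structural fact that the scalar-multiple-of-isometry property collapses the Frobenius sphere of $V$ onto the Stiefel manifold, after which the simplex construction is immediate. The one point deserving care is confirming that $S$ is a genuine $(\rho_F(d)-1)$-dimensional sphere rather than a lower-dimensional slice, which holds because $\|\cdot\|_{\operatorname{Fro}}$ restricts to a nondegenerate Euclidean norm on the real space $V$.
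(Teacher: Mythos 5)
Your proposal is correct and follows exactly the same route as the paper's (much terser) proof: take the $\rho_F(d)$-dimensional Radon--Hurwitz subspace, observe that its Frobenius sphere of radius $\sqrt{d}$ sits inside $\operatorname{St}_F(d,d)$, and inscribe a centered regular simplex there. The additional details you supply---the scaling argument showing $\|M\|_{\operatorname{Fro}}=\sqrt{d}$ forces $M^*M=I$, and the verification of the equality conditions of Theorem~\ref{thm.stiefel simplex bound}---are exactly the steps the paper leaves implicit.
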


\begin{proof}
There exists a $\rho_F(d)$-dimensional real subspace of $F^{d\times d}$ in which every matrix of Frobenius norm $\sqrt{d}$ is orthogonal (resp.\ unitary).
It suffices to select any $n$ points in this sphere that form the vertices of a centered regular simplex.
\end{proof}

While the previous result is limited by the fact that $\rho_F(d)=O(\log d)$, the following result gives that there also exists a $(d,d,n)$-SSC with $n=d+1$.

\begin{lemma}
There exists a real $(d,d,d+1)$-Stiefel simplex code.
\end{lemma}

\begin{proof}
Take any group $G$ of order $d+1$, and let $\rho\colon G\to O(d+1)$ denote the left regular representation of $G$.
The image of this representation consists of $d+1$ orthogonal matrices that are pairwise orthogonal.
Furthermore, the trivial representation is a constituent representation with multiplicity $1$, and so there exists $Q\in O(d+1)$ and a representation $\pi\colon G\to O(d)$ such that 
\[
Q\rho(g)Q^{-1}
=\left[\begin{array}{cc}
1&0_d^\top\\
0_d&\pi(g)
\end{array}\right]
\]
for all $g\in G$.
Next, for every $g,g'\in G$, we use the Kronecker delta to write
\[
(d+1)\cdot\delta_{g,g'}
=\operatorname{Tr}\big(\rho(g)^\top\rho(g')\big)
=\operatorname{Tr}\big((Q\rho(g)Q^\top)^\top(Q\rho(g')Q^\top)\big)
=1+\operatorname{Tr}\big(\pi(g)^\top\pi(g')\big).
\]
Thus, if $g\neq g'$, then
\[
\|\pi(g)-\pi(g')\|_{\operatorname{Fro}}^2
=\|\pi(g)\|_{\operatorname{Fro}}^2-2\operatorname{Tr}\big(\pi(g)^\top\pi(g')\big)+\|\pi(g')\|_{\operatorname{Fro}}^2
=2d+2.
\]
Furthermore, since the trivial representation is not consituent in $\pi$, it follows that $\sum_{g\in G}\pi(g)=0$.
Overall, $\{\pi(g)\}_{g\in G}$ is a $(d,d,d+1)$-SSC.
\end{proof}

Next, we construct SSCs by \textit{symplectic lifting}.

\begin{lemma}
If $d$ is even and $n\leq md+1$, then there exists a $(d,2,n)$-Stiefel simplex code.
\end{lemma}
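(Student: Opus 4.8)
The plan is to lift a spherical simplex code to the Stiefel manifold using a fixed symplectic structure on $F^d$. First I would invoke Lemma~\ref{lem.r=1}: since $n\leq md+1$, there is a $(d,1,n)$-Stiefel simplex code $\{v_i\}_{i\in[n]}$, i.e., unit vectors in $F^d$ with $\|v_i-v_j\|$ constant over $i\neq j$ and $\sum_i v_i=0$. Because $d$ is even, I would fix a real skew-symmetric orthogonal matrix $J\in\mathbb{R}^{d\times d}$ — for instance the standard symplectic matrix $\left[\begin{smallmatrix}0&-I\\I&0\end{smallmatrix}\right]$, which satisfies $J^\top=-J$ and $J^\top J=I$ — and then define the lift $X_i:=[\,v_i\mid J\overline{v_i}\,]\in F^{d\times 2}$, where $\overline{\cdot}$ denotes entrywise conjugation (trivial when $F=\mathbb{R}$).

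The next step is to verify that the columns of each $X_i$ are orthonormal. The first column has unit norm by hypothesis; orthogonality of $J$ gives $\|J\overline{v_i}\|=\|\overline{v_i}\|=1$; and skew-symmetry of $J$ forces $v_i^*J\overline{v_i}=0$ (expand $v_i$ into real and imaginary parts and apply $J^\top=-J$). Hence $X_i^*X_i=I_2$, so $X_i\in\operatorname{St}_F(d,2)$.

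I would then transfer the two equality conditions of Theorem~\ref{thm.stiefel simplex bound}. Since $J$ is an isometry of $F^d$ and $J\overline{v_i}-J\overline{v_j}=J\,\overline{(v_i-v_j)}$, we obtain $\|X_i-X_j\|_{\operatorname{Fro}}^2=\|v_i-v_j\|^2+\|v_i-v_j\|^2=2\|v_i-v_j\|^2$, which is constant over $i\neq j$; likewise $\sum_i X_i=\big[\,\sum_i v_i\mid J\,\overline{\textstyle\sum_i v_i}\,\big]=0$. By the equality characterization in Theorem~\ref{thm.stiefel simplex bound}, $\{X_i\}_{i\in[n]}$ is a $(d,2,n)$-Stiefel simplex code, which completes the argument.

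The main obstacle is precisely the column-orthogonality step over $\mathbb{C}$, and it is what dictates the appearance of conjugation. A purely linear lift $v\mapsto[\,v\mid Jv\,]$ would require $v^*Jv=0$ for all $v$; over $\mathbb{R}$ this is exactly skew-symmetry of $J$, but over $\mathbb{C}$ testing $v=e_k$, $v=e_k+e_l$, and $v=e_k+ie_l$ forces every entry of $J$ to vanish, so no unitary linear $J$ exists. Replacing $Jv$ by the conjugate-linear lift $J\overline{v}$ restores $v^*J\overline{v}=0$ simultaneously in both fields, and it is this single adjustment that lets one uniform construction serve the real and complex cases at once.
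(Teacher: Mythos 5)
Your proposal is correct and matches the paper's proof essentially verbatim: both invoke Lemma~\ref{lem.r=1}, lift each unit vector $v$ to $[\,v\mid J\overline{v}\,]$ using the standard symplectic matrix (the paper writes the second column as $\overline{Av}$, which equals $A\overline{v}$ since $A$ is real), verify column orthogonality via skew-symmetry, and check that distances remain constant and the code stays centered. The only cosmetic difference is that the paper computes the common distance explicitly as $\frac{4n}{n-1}$ while you appeal to the equality characterization in Theorem~\ref{thm.stiefel simplex bound}; these are equivalent.
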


\begin{proof}
Since $n\leq md+1$, then by Lemma~\ref{lem.r=1}, there exists a $(d,1,n)$-SSC consisting of unit vectors $x_1,\ldots,x_n\in F^d$.
Since $d$ is even, we may consider the block matrix
\[
A:=\left[\begin{array}{cc}
0&-I_{d/2}\\
I_{d/2}&0
\end{array}\right].
\]
Then for each $i\in[n]$, the matrix $X_i:=[x_i,\overline{Ax_i}]\in F^{d\times 2}$ has orthonormal columns, and therefore resides in $\operatorname{St}_F(d,r)$. Furthermore, for each $i,j\in[n]$ with $i\neq j$, we have
\[
\|X_i-X_j\|_{\operatorname{Fro}}^2
=\|x_i-x_j\|^2+\|\overline{Ax_i}-\overline{Ax_j}\|^2
=2\|x_i-x_j\|^2
=\frac{4n}{n-1},
\]
where the last step follows from equality in Rankin's simplex bound.
Finally, $\{X_i\}_{i\in[n]}$ is centered since $\{x_i\}_{i\in[n]}$ is centered.
Overall, $\{x_i\}_{i\in[n]}$ is a $(d,2,n)$-SSC.
\end{proof}

Next, we show how to combine an SSC in $\operatorname{St}_F(d,r)$ with a resolvable balanced incomplete block design (BIBD) to obtain a larger SSC.
In order to disambiguate from the well-established BIBD parameter $r$, we start with an SSC in $\operatorname{St}_F(d,s)$.

\begin{theorem}
\label{thm.resolvable bibd}
If there exists a real (resp.\ complex) $(d,s,n)$-Stiefel simplex code and a resolvable $(v,b,r,k,\lambda)$-balanced incomplete block design with $k=n$ and $\lambda=1$, then there exists a real (resp.\ complex) $(bd,rs,v)$-Stiefel simplex code.
\end{theorem}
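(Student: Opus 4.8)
The plan is to build each new codeword as a block matrix whose block pattern is dictated by the incidence structure of the resolvable design, with the given code $\{X_1,\dots,X_n\}$ in $\operatorname{St}_F(d,s)$ supplying the nonzero blocks. I would index the $bd$ rows of each new codeword by pairs (block, $[d]$) and the $rs$ columns by pairs (parallel class, $[s]$), so that a codeword becomes a $b\times r$ array of $d\times s$ blocks. Since the design is resolvable, its $b$ blocks partition into exactly $r$ parallel classes, and each point $p\in[v]$ lies in exactly one block $\beta(p,c)$ of each parallel class $c$; within that block, $p$ occupies one of the $k=n$ positions, say $\sigma(p,c)\in[n]$. For each point $p$ I would then define $Y_p$ by placing $X_{\sigma(p,c)}$ in the $(\beta(p,c),c)$ block and zeros everywhere else.

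Next I would verify that $Y_p\in\operatorname{St}_F(bd,rs)$. The $s$ columns coming from a single parallel class $c$ are supported on the rows of the single block $\beta(p,c)$ and there agree with the orthonormal columns of $X_{\sigma(p,c)}$, so they are orthonormal; columns from two different parallel classes are supported on distinct blocks (distinct because a resolution partitions the block set), hence on disjoint row sets, so they are orthogonal. Thus $Y_p^* Y_p = I_{rs}$.

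The heart of the argument is the distance computation, and this is where the hypothesis $\lambda=1$ does the work. Expanding $\|Y_p-Y_q\|_{\operatorname{Fro}}^2 = 2rs - 2\operatorname{Re}\langle Y_p,Y_q\rangle$, the inner product collects a contribution from the $(B,c)$ block only when $p$ and $q$ share the block $B$ in class $c$. Because $\lambda=1$, the pair $\{p,q\}$ lies together in exactly one block, which belongs to exactly one parallel class; in that single class, $p$ and $q$ occupy distinct positions, so the contribution is $\operatorname{Re}\langle X_i,X_j\rangle$ for some $i\neq j$. From the simplex identity $\|X_i-X_j\|_{\operatorname{Fro}}^2=\frac{2sn}{n-1}$ I get $\operatorname{Re}\langle X_i,X_j\rangle=-\frac{s}{n-1}$, independent of $i\neq j$, so every pairwise squared distance equals the constant $2rs+\frac{2s}{n-1}$. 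Using the design identity $r(n-1)=v-1$, this simplifies to $\frac{2sv}{n-1}=\frac{2(rs)v}{v-1}$, which is exactly equality in the Stiefel simplex bound for $\operatorname{St}_F(bd,rs)$ with $v$ codewords.

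Finally I would check the centering condition $\sum_{p}Y_p=0$ block by block: the $(B,c)$ block is nonzero only when $B$ is a block of parallel class $c$, in which case summing over the $p$ with $\beta(p,c)=B$ runs over exactly the $n$ points of $B$, once in each position, giving $\sum_{i=1}^n X_i=0$ by the centering of the original code. Hence $\{Y_p\}_{p\in[v]}$ is equidistant, centered, and attains the bound, so it is a $(bd,rs,v)$-Stiefel simplex code. The one point demanding care is keeping the three roles of the design cleanly separated — resolvability (used for column orthonormality and for centering), the position function $\sigma$, and $\lambda=1$ (used for the constant inner product); once the indexing is fixed, each verification is a short computation.
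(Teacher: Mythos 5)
Your proof is correct and follows essentially the same route as the paper's: the identical block-matrix construction indexed by blocks $\times$ parallel classes with a bijection assigning the $n$ code elements to the $k=n$ points of each block, the same two-case orthonormality check, and the same distance computation in which $\lambda=1$ isolates a single cross term equal to $-\frac{s}{n-1}$ and the identity $r(k-1)=v-1$ converts the result into the bound value $\frac{2rsv}{v-1}$. Your additional verification of the centering condition $\sum_p Y_p=0$ is correct but redundant, since showing every pairwise distance equals the bound value already forces equality in the Stiefel simplex bound.
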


As an example, we combine the real $(1,1,2)$-SSC $\{\pm1\}$ from Example~\ref{ex.R_1_1} with the resolvable BIBD whose points and blocks are the vertices and edges of the complete graph on $4$ vertices.
The incidence matrix of this BIBD is given by
\[
\left[\begin{array}{cc|cc|cc}
1&0&1&0&1&0\\
1&0&0&1&0&1\\
0&1&1&0&0&1\\
0&1&0&1&1&0
\end{array}\right],
\]
where the rows are indexed by the $v=4$ points and the columns are indexed by the $b=6$ blocks.
Here, we arranged the blocks into $r=3$ pairs that each partition the point set.
This is known as a \textit{resolution} of the resolvable BIBD.
Each column has a total of $k=2$ ones, and each pair of distinct rows has $\lambda=1$ one in common.
Since $k=n$, we can bijectively assign the members of our SSC to the ones in each column, for example:
\[
\left[\begin{array}{cc|cc|cc}
+&0&-&0&+&0\\
-&0&0&+&0&-\\
0&+&+&0&0&+\\
0&-&0&-&-&0
\end{array}\right].
\]
We now convert each row of this SSC-weighted incidence matrix into a point in the $6\times 3$ Stiefel manifold:
\[
\left[\begin{array}{ccc}
+&&\\
0&&\\
&-&\\
&0&\\
&&+\\
&&0
\end{array}\right],
\qquad
\left[\begin{array}{ccc}
-&&\\
0&&\\
&0&\\
&+&\\
&&0\\
&&-
\end{array}\right],
\qquad
\left[\begin{array}{ccc}
0&&\\
+&&\\
&+&\\
&0&\\
&&0\\
&&+
\end{array}\right],
\qquad
\left[\begin{array}{ccc}
0&&\\
-&&\\
&0&\\
&-&\\
&&-\\
&&0
\end{array}\right].
\]
(Here, blank entries are zeros.)
One may verify that this is a real $(6,3,4)$-SSC.

\begin{proof}[Proof of Theorem~\ref{thm.resolvable bibd}]
Let $\{X_i\}_{i\in[n]}$ denote the given $(d,s,n)$-SSC, and let $\mathcal{V}$ and $\mathcal{B}$ denote the point and block sets of the given BIBD.
For each block $B\in\mathcal{B}$, select a bijection
\[
X_B\colon B\to\{X_i:i\in[n]\}.
\]
(Such a bijection exists since $k=n$ by assumption.)
Next, since our BIBD is resolvable, there exists a \textit{resolution} $\mathcal{R}$, that is, a partition of $\mathcal{B}$ with the property that every $\mathcal{C}\in\mathcal{R}$ is a partition of $\mathcal{V}$.
Then for each $p\in\mathcal{V}$, we define $Y_p\in(F^{d\times s})^{\mathcal{B}\times\mathcal{R}}$ by 
\[
(Y_p)_{B,\mathcal{C}}
:=1_{\{p\in B\}}\cdot 1_{\{B\in\mathcal{C}\}}\cdot X_B(p).
\]
(Here, $1_{\{P\}}$ is defined to be $1$ whenever $P$ holds, and is otherwise $0$.)
One may argue in cases that the $rs$ columns of $Y_p$ are orthonormal: for columns with the same $\mathcal{C}$, the inner product reduces to an inner product between (orthonormal) columns of $X_B(p)$, whereas columns with different $\mathcal{C}$'s have disjoint support.
Thus, the matrices $\{Y_p\}_{p\in\mathcal{V}}$ reside in $\operatorname{St}_F(bd,rs)$, and it remains to verify that they achieve equality in the Stiefel simplex bound.
To this end, fix any $p,q\in\mathcal{V}$ with $p\neq q$.
Then
\begin{align*}
\operatorname{Re}\operatorname{Tr}(Y_p^*Y_q)
&=\operatorname{Re}\operatorname{Tr}\bigg(\sum_{B\in\mathcal{B}}\sum_{\mathcal{C}\in\mathcal{R}}(Y_p)_{B,\mathcal{C}}^*(Y_q)_{B,\mathcal{C}}\bigg)\\
&=\operatorname{Re}\operatorname{Tr}\bigg(\sum_{B\in\mathcal{B}}\sum_{\mathcal{C}\in\mathcal{R}}1_{\{p\in B\}}1_{\{q\in B\}}1_{\{B\in\mathcal{C}\}} X_B(p)^*X_B(q)\bigg)\\
&=\sum_{B\in\mathcal{B}}1_{\{p\in B\}}1_{\{q\in B\}}
\sum_{\mathcal{C}\in\mathcal{R}}
1_{\{B\in\mathcal{C}\}}\operatorname{Re}\operatorname{Tr}\big(X_B(p)^*X_B(q)\big).
\end{align*}
Since $X_B$ is a bijection and $\{X_i\}_{i\in[n]}$ is a $(d,s,n)$-SSC by assumption, we have 
\[
\operatorname{Re}\operatorname{Tr}\big(X_B(p)^*X_B(q)\big)
=-\frac{s}{n-1}
=-\frac{rs}{v-1},
\]
where the last step follows from our assumption that $k=n$ and the standard result about BIBDs that $r=\frac{v-1}{k-1}$.
Thus,
\begin{align*}
\operatorname{Re}\operatorname{Tr}(Y_p^*Y_q)
&=\sum_{B\in\mathcal{B}}1_{\{p\in B\}}1_{\{q\in B\}}
\sum_{\mathcal{C}\in\mathcal{R}}
1_{\{B\in\mathcal{C}\}}\operatorname{Re}\operatorname{Tr}\big(X_B(p)^*X_B(q)\big)\\
&=-\frac{rs}{v-1}\sum_{B\in\mathcal{B}}1_{\{p\in B\}}1_{\{q\in B\}}
\sum_{\mathcal{C}\in\mathcal{R}}
1_{\{B\in\mathcal{C}\}}\\
&=-\frac{rs}{v-1},
\end{align*}
where the last step follows from the facts that every block $B\in\mathcal{B}$ is contained in a unique parallel class $\mathcal{C}\in\mathcal{R}$, and every pair of distinct points $p,q\in\mathcal{V}$ determines a unique block $B\in\mathcal{B}$.
Finally, we have
\[
\|Y_p-Y_q\|_{\operatorname{Fro}}^2
=\|Y_p\|_{\operatorname{Fro}}^2-2\operatorname{Re}\operatorname{Tr}(Y_p^*Y_q)+\|Y_q\|_{\operatorname{Fro}}^2
=2rs+2\cdot\frac{rs}{v-1}
=\frac{2rsv}{v-1}.
\]
Since $p$ and $q$ were arbitrary, it follows that $\{Y_p\}_{p\in\mathcal{V}}$ achieves equality in the Stiefel simplex bound.
\end{proof}

\section{Stiefel orthoplex codes}

By modifying our proof of Theorem~\ref{thm.stiefel orthoplex bound} to instead pass to Rankin's orthoplex bound, we obtain the following result.

\begin{theorem}[Stiefel orthoplex bound]
\label{thm.stiefel orthoplex bound}
For every $X_1,\ldots,X_n\in \operatorname{St}_F(d,r)$ with $n>mdr+1$, it holds that
\[
\min_{\substack{i,j\in[n]\\i\neq j}}
\|X_i-X_j\|_{\operatorname{Fro}}
\leq\sqrt{2r}.
\]
Equality is achievable only if $n\leq 2mdr$.
\end{theorem}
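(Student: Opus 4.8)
The plan is to mimic the proof of Theorem~\ref{thm.stiefel simplex bound}, substituting Rankin's orthoplex bound for his simplex bound. First I would observe that $F^{d\times r}$, equipped with the real inner product $(X,Y)\mapsto\operatorname{Re}\operatorname{Tr}(X^*Y)$ inducing the Frobenius norm, is isometric to Euclidean space $\mathbb{R}^{mdr}$: separating each of the $dr$ entries of a matrix in $F^{d\times r}$ into its $m$ real coordinates gives exactly $mdr$ real degrees of freedom, under which the Frobenius norm becomes the standard Euclidean norm. Since every $X\in\operatorname{St}_F(d,r)$ satisfies $\|X\|_{\operatorname{Fro}}^2=\operatorname{Tr}(X^*X)=\operatorname{Tr}(I_r)=r$, the Stiefel manifold is contained in the sphere of radius $\sqrt{r}$ in this copy of $\mathbb{R}^{mdr}$, centered at the origin.

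Next I would rescale. Setting $x_i:=X_i/\sqrt{r}$ for each $i\in[n]$ produces unit vectors $x_1,\ldots,x_n\in S^{mdr-1}$, and since this map is a similarity with ratio $\sqrt{r}$, we have $\|X_i-X_j\|_{\operatorname{Fro}}=\sqrt{r}\,\|x_i-x_j\|$ for all $i,j$. The hypothesis $n>mdr+1$ is precisely what is needed to invoke Rankin's orthoplex bound (Proposition~\ref{prop.rankin orthoplex}) in ambient dimension $mdr$. That bound yields $\min_{i\neq j}\|x_i-x_j\|\leq\sqrt{2}$, and multiplying through by $\sqrt{r}$ gives the claimed inequality $\min_{i\neq j}\|X_i-X_j\|_{\operatorname{Fro}}\leq\sqrt{2r}$.

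For the existence condition I would transfer the corresponding statement of Rankin's orthoplex bound through the same similarity. A Stiefel code achieves equality in the Stiefel orthoplex bound if and only if its rescaled image achieves equality in Rankin's orthoplex bound on $S^{mdr-1}$; since the latter is achievable only when the number of points is at most twice the ambient dimension, that is, at most $2(mdr)$, equality in the Stiefel orthoplex bound is achievable only when $n\leq 2mdr$.

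As for difficulty, there is essentially no hard step here: the argument is a relaxation to the circumscribing sphere followed by a rescaling, exactly as in the simplex case. The only point requiring care is the bookkeeping of the ambient real dimension---one must count it as $mdr$ rather than $dr$, so that the threshold $n>mdr+1$ in our hypothesis lines up with the threshold $n>(mdr)+1$ in Rankin's bound and the constraint $n\leq 2mdr$ emerges correctly.
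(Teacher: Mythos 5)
Your proposal is correct and is exactly the paper's argument: the paper proves this result by noting (as in Theorem~\ref{thm.stiefel simplex bound}) that $\operatorname{St}_F(d,r)$ lies on the Frobenius sphere of radius $\sqrt{r}$ in $F^{d\times r}\cong\mathbb{R}^{mdr}$ and then passing to Rankin's orthoplex bound instead of his simplex bound. Your write-up simply makes explicit the rescaling to $S^{mdr-1}$ and the real-dimension bookkeeping that the paper leaves implicit.
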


We refer to codes that achieve equality in the Stiefel orthoplex bound as \textbf{real (resp.\ complex) $(d,r,n)$-Stiefel orthoplex codes (SOCs)}.
In this section, we construct several of such codes.
We first show that complex SOCs exist for \textit{all} possible choices of parameters.

\begin{theorem}
\label{thm.complex orthoplex}
There exists a complex $(d,r,n)$-Stiefel orthoplex code if and only if $n\in(2dr+1,4dr]$.
\end{theorem}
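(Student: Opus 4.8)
The plan is to treat the two directions separately: the ``only if'' is immediate, while the ``if'' reduces to a single explicit construction. Recall that $m=2$ in the complex case, so a complex Stiefel orthoplex code is by definition a code at some $n>mdr+1=2dr+1$ achieving equality in Theorem~\ref{thm.stiefel orthoplex bound}, whence that same theorem forces $n\leq 2mdr=4dr$; this gives existence $\Rightarrow n\in(2dr+1,4dr]$. For the converse I would build, for every $d\geq r$, one ``Stiefel orthoplex'' of $4dr$ points and extract every admissible code as a subset. Writing $\langle X,Y\rangle:=\operatorname{Re}\operatorname{Tr}(X^*Y)$ for the real inner product on $\mathbb{C}^{d\times r}\cong\mathbb{R}^{2dr}$, we have $\|X_i-X_j\|_{\operatorname{Fro}}^2=2r-2\langle X_i,X_j\rangle$, so it suffices to produce $2dr$ matrices in $\operatorname{St}_\mathbb{C}(d,r)$ that are pairwise orthogonal with respect to $\langle\cdot,\cdot\rangle$: together with their negatives these give $4dr$ points whose pairwise squared distances are all $2r$ (non-antipodal) or $4r$ (antipodal). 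Any $n\geq 3$ of them then has minimum distance exactly $\sqrt{2r}$, and since $n\in(2dr+1,4dr]$ forces $n\geq 3$, each such $n$ yields a complex $(d,r,n)$-SOC.

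To obtain the $2dr$ real-orthogonal Stiefel matrices I would first pass to a complex-linear problem by an ``$i$-doubling'' trick: if $\{M_k\}\subseteq\operatorname{St}_\mathbb{C}(d,r)$ satisfies $\operatorname{Tr}(M_k^*M_l)=0$ for $k\neq l$, then $\{M_k\}\cup\{iM_k\}$ is automatically $\langle\cdot,\cdot\rangle$-orthogonal, since $\langle M_k,iM_k\rangle=\operatorname{Re}(i\operatorname{Tr}(M_k^*M_k))=\operatorname{Re}(ir)=0$ and the remaining cross terms vanish. As $\dim_\mathbb{C}\mathbb{C}^{d\times r}=dr$, it then remains only to exhibit a complex-orthogonal basis of $\mathbb{C}^{d\times r}$ made of Stiefel matrices, i.e.\ $dr$ isometries $\mathbb{C}^r\to\mathbb{C}^d$ that are pairwise orthogonal under $\operatorname{Tr}(\cdot^*\cdot)$.

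I expect this last step to be the crux, because the natural attempt—compressing a unitary error basis of $U(d)$, such as the Weyl--Heisenberg basis, to its first $r$ columns—only preserves orthogonality when $r\mid d$, and no genericity argument can force exact orthogonality. My resolution is an explicit cyclic construction. Fix the standard basis $f_0,\ldots,f_{d-1}$ of $\mathbb{C}^d$, set $\eta:=e^{2\pi i/r}$, and for each $(a,b)\in\mathbb{Z}_r\times\mathbb{Z}_d$ define $M_{a,b}$ by declaring its $c$-th column ($c\in\{0,\ldots,r-1\}$) to be $\eta^{ac}f_{(b+c)\bmod d}$. Each $M_{a,b}$ is an isometry since its columns occupy $r$ distinct standard basis vectors. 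For fixed $b$ and $a\neq a'$ the inner product is $\sum_{c=0}^{r-1}\eta^{(a'-a)c}=0$ by the geometric sum over $r$-th roots of unity, while for $b\neq b'$ the supports $\{(b+c)\bmod d\}$ and $\{(b'+c)\bmod d\}$ disagree in every column, so the inner product vanishes termwise. Thus these $dr$ nonzero isometries are pairwise complex-orthogonal, and being equal in number to $\dim_\mathbb{C}\mathbb{C}^{d\times r}$ they form the required basis. Feeding this through the $i$-doubling and then the orthoplex/subset argument supplies a complex $(d,r,n)$-SOC for every $n\in(2dr+1,4dr]$, which together with the ``only if'' direction from Theorem~\ref{thm.stiefel orthoplex bound} establishes the equivalence.
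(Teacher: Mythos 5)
Your proof is correct and constructs exactly the same code as the paper: your matrices $M_{a,b}$ (columns $\eta^{ac}f_{(b+c)\bmod d}$) together with their $i^k$-multiples are precisely the orbit of $X_0=[I_r;\,0]$ under the paper's action $(a,b,c)\cdot X=i^aT^bXM^{-c}$ of $C_4\times C_d\times C_r$, and your orthogonality computation is the same trace identity the paper uses. The only difference is packaging—you verify pairwise real-orthogonality directly and invoke the orthoplex structure, while the paper organizes the same verification as a group-orbit argument reducing to distances from $X_0$—so this counts as essentially the same approach.
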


\begin{proof}
The ``only if'' part follows from the Stiefel orthoplex bound.
For the ``if'' part, it suffices to construct a complex $(d,r,n)$-SOC with $n=4dr$, since one can then remove points to obtain an SOC with $n\in(2dr+1,4dr)$.
To this end, let $e_k$ denote the $k$th standard basis element, let $T\in\mathbb{C}^{d\times d}$ denote circular translation defined by $Te_k=e_{k+1\bmod d}$, and let $M\in\mathbb{C}^{r\times r}$ denote modulation defined by $Me_k=e^{2\pi i k/r}e_k$.
Consider the action of $C_4\times C_d\times C_r$ on $\mathbb{C}^{d\times r}$ defined by $(a,b,c)\cdot X=i^aT^bXM^{-c}$.
We claim that the orbit of $X_0:=[I_r;\,0_{(d-r)\times r}]$ is a complex $(d,r,4dr)$-SOC.
Since this group acts by isometries, it suffices to verify that
\[
\min_{(a,b,c)\neq(0,0,0)}
\|X_0-(a,b,c)\cdot X_0\|_{\operatorname{Fro}}
=\sqrt{2r},
\]
or equivalently, that
\[
\max_{(a,b,c)\neq(0,0,0)}\operatorname{Re}\operatorname{Tr}(X_0^*i^aT^bX_0M^{-c})=0.
\]
In general, one can express the above trace in terms of the Kronecker delta as
\[
\operatorname{Tr}(X_0^*i^aT^bX_0M^{-c})
=i^a\cdot\delta_{b,0}\cdot\delta_{c,0}\cdot r,
\]
from which the result follows.
\end{proof}

The real case is more complicated.
First, we recall the spherical case in which $r=1$.

\begin{lemma}
There exists a real $(d,1,n)$-Stiefel orthoplex code if and only if $n\in(d+1,2d]$.
\end{lemma}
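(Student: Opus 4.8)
The plan is to reduce the statement entirely to Rankin's orthoplex bound by exploiting the fact that the case $r=1$ collapses the Stiefel manifold to a sphere. First I would invoke Example~\ref{ex.sphere} with $F=\mathbb{R}$ (so that $m=1$): the manifold $\operatorname{St}_{\mathbb{R}}(d,1)$ is isometric to the unit sphere $S^{d-1}$ equipped with chordal distance. Under this identification, a real $(d,1,n)$-Stiefel orthoplex code is, by definition, a configuration of $n$ points on $S^{d-1}$ with $n>d+1$ whose minimum chordal distance equals $\sqrt{2r}=\sqrt{2}$, that is, a spherical code attaining equality in the bound of Proposition~\ref{prop.rankin orthoplex}.

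For the ``only if'' direction, I would argue that the existence of such a code forces $n\in(d+1,2d]$: the lower constraint $n>d+1$ is built into the definition of an SOC (it is precisely the regime of the Stiefel orthoplex bound, Theorem~\ref{thm.stiefel orthoplex bound}, specialized to $m=r=1$), while the upper bound $n\le 2d$ is exactly the achievability clause of Rankin's orthoplex bound, Proposition~\ref{prop.rankin orthoplex}.

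For the ``if'' direction, given any $n$ with $d+1<n\le 2d$, I would exhibit an explicit code by selecting $n$ of the $2d$ orthoplex vertices $\{\pm e_1,\ldots,\pm e_d\}\subset S^{d-1}$, exactly as in Proposition~\ref{prop.rankin orthoplex}. A short check confirms the minimum chordal distance is $\sqrt{2}$: two distinct vertices are at distance $\sqrt{2}$ unless they form an antipodal pair $\{e_i,-e_i\}$, in which case their distance is $2$; and since antipodality pairs up the vertices uniquely, any three distinct vertices must contain a non-antipodal pair, so for $n\ge 3$ (which holds throughout the range) the minimum is attained at value $\sqrt 2$. Transporting this configuration back through the isometry of Example~\ref{ex.sphere} yields the desired code in $\operatorname{St}_{\mathbb{R}}(d,1)$.

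There is no real obstacle here; the entire content lies in the reduction to the sphere and in matching the admissible range $(d+1,2d]$ to the two clauses of Rankin's orthoplex bound. The only point warranting care is the boundary bookkeeping: confirming that the lower endpoint $n>d+1$ originates from the SOC definition rather than from Rankin's bound itself, and verifying that the orthoplex construction genuinely realizes the minimum distance $\sqrt 2$ for every $n$ in the range, not merely for the extremal case $n=2d$.
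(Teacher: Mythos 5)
Your proposal is correct and matches the paper's approach: the paper states this lemma without proof precisely because, for $r=1$, it is nothing but the spherical case already covered by Proposition~\ref{prop.rankin orthoplex} and Example~\ref{ex.sphere}(iii), which is exactly the reduction you carry out. Your extra care about the boundary bookkeeping (the lower endpoint coming from the SOC definition, and the minimum distance being exactly $\sqrt{2}$ for every $n$ in the range since $n\geq 3$ forces a non-antipodal pair) correctly fills in the details the paper leaves implicit.
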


Next, we treat larger values of $r$ by fomulating a real generalization of our construction in Theorem~\ref{thm.complex orthoplex}.

\begin{theorem}
Assuming the Hadamard conjecture, there exists a real $(d,r,n)$-Stiefel orthoplex code for every $r\not\equiv 1\bmod 4$, $d\geq r$, and $n>d+1$ such that
\[
n
\leq\left\{\begin{array}{cl}
2dr&\text{if }r\equiv0\bmod 4\\
d(r+2)&\text{if }r\equiv2\bmod 4\\
d(r+1)&\text{if }r\equiv3\bmod 4.
\end{array}\right.
\]
\end{theorem}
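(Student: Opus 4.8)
The goal is to construct a real $(d,r,n)$-Stiefel orthoplex code for each admissible $r\not\equiv 1\bmod 4$ and $n$ up to the stated bound. Mirroring the complex construction in Theorem~\ref{thm.complex orthoplex}, the strategy is to build the \emph{largest} code in each residue class (i.e.\ at the upper bound on $n$) via a group orbit, and then delete points to obtain any smaller admissible $n$. The orthoplex condition demands that all pairwise distances equal $\sqrt{2r}$, which by the identity $\|X_0 - g\cdot X_0\|_{\operatorname{Fro}}^2 = 2r - 2\operatorname{Tr}(X_0^\top g\cdot X_0)$ is equivalent to showing that every nontrivial group element $g$ satisfies $\operatorname{Tr}(X_0^\top\, g\cdot X_0) \le 0$, with the orbit having exactly the right cardinality $n$. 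The essential problem, relative to the complex case, is that the scalar $i$ (which supplied the factor of $4$ and whose powers $i^a$ automatically kill the real part of the trace for $a\neq 0$) is unavailable over $\mathbb{R}$; one must recover an analogous mechanism using real orthogonal replacements.

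**Choosing the group action.** First I would retain the real translation $T\in O(d)$ defined by $Te_k = e_{k+1\bmod d}$, contributing the $\delta_{b,0}$ factor and a multiplicative $d$ in the orbit size. In place of complex modulation $M$ and the scalar $i$, I would introduce a real orthogonal modulation acting on the column index together with a sign/orthogonal involution, assembled so that the support or trace structure forces orthogonality. The three residue classes for $r$ correspond to how many extra independent anticommuting real orthogonal matrices are available on $F^{r}$: when $r\equiv 0\bmod 4$ one has the richest supply (giving the factor $2r$, matching the complex bound), when $r\equiv 2\bmod 4$ one gets $r+2$, and when $r\equiv 3\bmod 4$ one gets $r+1$. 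This is precisely where the \textbf{Hadamard conjecture} enters: a Hadamard matrix of the appropriate order supplies a family of $\pm1$-valued orthogonal matrices that are mutually ``distance-$\sqrt{2r}$'' when applied to $X_0$, and the count of usable such matrices is governed by the arithmetic of $r\bmod 4$. I would make $X_0 := [I_r;\, 0_{(d-r)\times r}]$ as before and verify that each modulated copy $X_0 M$ satisfies $\operatorname{Tr}(X_0^\top X_0 M) = \operatorname{Tr}(M|_{[r]\times[r]}) \le 0$ for the chosen modulations $M$.

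**Verifying the orthoplex condition and the count.** With the action fixed, the bulk of the argument is the trace computation $\operatorname{Tr}(X_0^\top\, T^b X_0 M)$, which factors as (a translation part forcing $b=0$) times (a modulation part). For $b\neq 0$ the translation shifts the support off the diagonal block and the trace vanishes; for $b=0$ the trace reduces to the trace of the upper-left $r\times r$ block of the modulation matrix, which I would arrange to be $\le 0$ (indeed $=0$ or negative) for every nontrivial modulation by exploiting the row-balance property of Hadamard matrices, namely that distinct rows are orthogonal so the relevant partial trace sums to zero. I would then count the orbit: $d$ translations times the number of modulations times any residual sign factor, and check that this equals the claimed maximum $n$ in each case. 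Finally, deleting orbit points yields every smaller $n>d+1$, and the lower constraint $n>d+1$ ensures we are genuinely in the orthoplex (not simplex) regime so that Theorem~\ref{thm.stiefel orthoplex bound} is the operative bound.

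**Main obstacle.** The hard part will be organizing the real orthogonal modulations so that the partial-trace-$\le 0$ condition holds \emph{simultaneously} for all nontrivial group elements while achieving exactly the stated orbit size in each residue class. Unlike the complex case, where $i^a$ uniformly annihilates $\operatorname{Re}\operatorname{Tr}$, here the sign structure must be engineered case-by-case from the Hadamard data, and I expect the $r\equiv 2$ and $r\equiv 3$ cases to require a careful bookkeeping of which anticommuting involutions survive, accounting for the deficits of $2$ and $1$ respectively from the ideal factor of $2r$. Verifying that the construction is well-defined (columns genuinely orthonormal, orbit free of the expected size) will be routine once the modulation family is pinned down, but pinning it down correctly for each residue is the crux.
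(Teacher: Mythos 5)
Your overall skeleton does match the paper's construction: codewords of the form (power of the cyclic shift $T$) times (a $d\times r$ diagonal sign pattern), with the key trace computation factoring as a $\delta_{a,a'}$ coming from the translation part times a correlation of the sign patterns, and with Hadamard matrices supplying sign patterns of pairwise nonpositive correlation. Indeed, for $r\equiv 0\bmod 4$ your plan essentially closes: the rows of an $r\times r$ Hadamard matrix together with their negatives give $2r$ sign vectors with pairwise inner products at most $0$, hence $n=2dr$.

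The genuine gap is in the cases $r\equiv 2,3\bmod 4$, which you explicitly defer (``pinning it down correctly for each residue is the crux'') and for which the mechanism you gesture at is wrong. The paper's missing ingredient is purely coding-theoretic: take the modulations to be diagonal sign matrices $D_c$ with $c$ ranging over a binary code $C\subseteq\{0,1\}^r$ of minimum Hamming distance at least $r/2$, so that $\operatorname{Tr}(D_c^\top D_{c'})=r-2d_H(c,c')\leq 0$ for $c\neq c'$; the maximal sizes $2r$, $r+2$, $r+1$ of such codes in the three residue classes are exactly the Plotkin bound, and Levenshtein's theorem (Theorem~8 of Chapter~2 in~\cite{MacWilliamsS:77}) shows these sizes are attained provided enough Hadamard matrices exist --- this is precisely where the Hadamard conjecture enters, because for $r\equiv 2,3\bmod 4$ there is no Hadamard matrix of order $r$ itself, and the optimal binary codes must be assembled from Hadamard matrices of other orders. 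By contrast, your suggestion that the counts $2r$, $r+2$, $r+1$ measure ``how many extra independent anticommuting real orthogonal matrices are available'' on $\mathbb{R}^r$ cannot work: pairwise anticommuting orthogonal families are governed by the Radon--Hurwitz number $\rho_{\mathbb{R}}(r)=O(\log r)$, and for odd $r$ two invertible matrices can never anticommute (compare determinants), so this mechanism produces nothing at all in the case $r\equiv 3\bmod 4$, let alone $r+1$ modulations; those counts come from coding theory, not from Clifford-algebra-type families. Two smaller corrections: no genuinely orthogonal ``modulation acting on the column index'' is needed anywhere (diagonal $\pm1$ patterns suffice), and the final code $\{T^aD_c\}_{a\in\mathbb{Z}/d,\,c\in C}$ need not be --- and in general is not --- a group orbit, since $C$ carries no group structure.
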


\begin{proof}
We start by describing a general construction, and then we argue that the ingredients of our construction exist under the hypothesis of the lemma.

Take any Hamming code $C\subseteq\{0,1\}^r$ of minimum distance at least $\frac{r}{2}$.
Let $T\in\mathbb{R}^{d\times d}$ denote circular translation defined by $Te_k=e_{k+1\bmod d}$, and for each $c\in C$, define $D_c\in\mathbb{R}^{d\times r}$ by $D_c(i,i)=(-1)^{c_i}$ for each $i\in[r]$ and $D_c(i,j)=0$ whenever $i\neq j$.
Then $D_c\in\operatorname{St}_\mathbb{R}(d,r)$.
We claim that $\{T^a D_c\}_{a\in\mathbb{Z}/d,\,c\in C}$ is a SOC.
In general, one can express the relevant trace in terms of the Kronecker delta and the Hamming distance as
\[
\operatorname{Tr}\big((T^aD_c)^\top(T^{a'}D_{c'})\big)
=\delta_{a,a'} \sum_{k=1}^r (-1)^{c_i} (-1)^{c'_i}
=\delta_{a,a'} \cdot \big(r-2d(c,c')\big),
\]
which is at most $0$ unless $(a,c)=(a',c')$.
Our intermediate claim follows.

It remains to determine how large $C$ can be.
By the Plotkin bound, $|C|$ is at most $2r$, $r+2$, or $r+1$, depending on whether $r\equiv 0$, $2$, or $3\bmod 4$, respectively; see Corollary~4 of Chapter~2 in~\cite{MacWilliamsS:77}.
Furthermore, Levenshtein proved that these upper bounds are always achievable provided enough Hadamard matrices exist; see Theorem~8 of Chapter~2 in~\cite{MacWilliamsS:77}.
\end{proof}

\section{Discussion}

In this paper, we constructed several optimal codes in the Stiefel manifold with chordal distance, but many open problems remain.
For example, we observed that for small values of $d$, many optimal codes in $O(d)$ and $U(d)$ can be realized as subgroups.
For general $d$, which finite subgroups of $O(d)$ and $U(d)$ are optimal codes?
While other Stiefel manifolds do not have a group structure, they are invariant under the action of linear isometries on the left and phased permutations on the right.
We used a subgroup of these symmetries to construct an orthoplex in every complex Stiefel manifold (see the proof of Theorem~\ref{thm.complex orthoplex}).
Are there more examples of optimal codes that can be realized as the orbit of a finite group?
See~\cite{PitavalT:13} for a few examples that take this approach.
Recall that in Example~\ref{ex.sphere}, we listed the four known infinite families of optimal spherical codes, and in this paper, we managed to realize three of these families as optimal Stiefel codes.
Is there a way to embed the generalized quadrangle family into the Stiefel manifold?
From a numerical standpoint, what are good optimization strategies for finding putatively optimal codes in the Stiefel manifold?
Finally, while our work here focused on achieving equality in the Stiefel simplex and orthoplex bounds, there are other bounds (e.g., those derived in~\cite{Henkel:05,HanR:06}) for which equality might also be achievable.

\section*{Acknowledgments}

This project was initiated at an ICERM workshop on ``Recent Progress on Optimal Point Distributions and Related Fields.''
NM thanks Ignacio Santamaria and Diego Cuevas at Universidad de Cantabria for introducing him to this problem.
JJ was supported in part by NSF DMS 2220320.
NM acknowledges the support of Generalitat Valenciana and the Conselleria d'Innovaci\'{o}, Universitats, Ci\`{e}ncia i Societat Digital, through the project ``AI4CS:\ Artificial Intelligence for complex systems:\ Brain, Earth, Climate, Society'' (CIPROM/2021/56).
DGM was supported in part by NSF DMS 2220304 and an Air Force Summer Faculty Fellowship.
The views expressed are those of the authors and do not reflect the official guidance or position of the United States Government, the Department of Defense, the United States Air Force, or the United States Space Force.

\end{document}